\numberwithin{equation}{section}
\numberwithin{figure}{section}
\theoremstyle{plain}
\newtheorem{thm}{\protect\theoremname}
  \theoremstyle{remark}
  \theoremstyle{definition}
  \newtheorem{example}[thm]{\protect\examplename}
  \theoremstyle{plain}
  \newtheorem{cor}[thm]{\protect\corollaryname}
  \newtheorem{lem}[thm]{\protect\lemmaname}
\DeclareMathOperator{\rad}{rad}
  \providecommand{\corollaryname}{Corollary}
  \providecommand{\examplename}{Example}
  \providecommand{\remarkname}{Remark}
  \providecommand{\lemmaname}{Lemma}
\providecommand{\theoremname}{Theorem}
\begin{document}

\title{Special Values of $q$-Gamma Products}

\author{Tanay Wakhare$^{\dag}$}
\address{$^{\dag}$~University of Maryland, College Park, MD 20742, USA}
\email{twakhare@gmail.com}

\maketitle

\begin{abstract}
We consider products of $q$-gamma functions with rational arguments, and prove several $q$-generalizations of recent works concerning products of gamma functions. In particular, we consider products indexed by Dirichlet characters, and provide several new values for infinite products in terms of $e^{-\pi}$ and gamma functions.
\end{abstract}

\section{Introduction}
Over the past several years, significant attention has been paid to products of gamma functions with rational arguments. These products have unexpected number theoretic \cite{Straub} and group theoretic \cite{ShortGamma} connections, and provide researchers with convenient simplifications in many areas of pure mathematics. 

However, significantly less attention has been paid to the \textit{$q$-gamma function}. A natural generalization of the gamma function, the $q$-gamma function also admits special values in the $0<q<1$ regime which lead to beautiful new infinite product identities. These closed forms hint at Ramanujan-style modular transformations which can potentially lead to new special values for other $q$ generalizations of special functions.

In this paper, we generalize several well known identities for products of gamma functions by considering products indexed by non-trivial Dirichlet characters. By specializing these at values of $q$ besides $1$ we obtain several new values of infinite products in terms of $e^{-\pi}$ and gamma functions. This presents an extension of the program begun in \cite{Dilcher}, which generalized many classical product identities to identities involving Dirichlet characters. We also encounter in a natural way a variant of the cyclotomic polynomials defined by  
\begin{equation}\label{psi_def}
\Psi_n(x):=\prod_{d|n}({1-x^{d}})^{\mu(d)}.
\end{equation}
Note that while $\Psi_n$ is \textit{almost} a cyclotomic polynomial, this would require $x^{\frac{n}{d}}$ instead. We later completely characterize these polynomials through 
\begin{equation}
\Psi_n(x)=\Phi_{\rad n}(x)^{\mu(\rad n)},
\end{equation}
where $\Phi_n(x)$ denotes the $n$-th cyclotomic polynomial and $\rad n:=\prod_{p|n}p$ denotes the \textit{radical} of $n$. In Section \ref{section2} we prove the main $q$-generalization and provide several applications, while in Section \ref{section3} we study the functions $\Psi_n(x)$. We proceed formally, though all of our product manipulations can be made rigorous by considering regions of absolute convergence of the terms inside our products.

The prototype for our identities will be the simple product
\begin{equation}
\prod_{k=1}^\infty \left(1 - \frac{(-1)^k}{2k+1}\right) = \pi\frac{\sqrt{2}}{4}.
\end{equation}
The salient feature of this product is that it can be rewritten as $ \prod_{k=1}^\infty \left(1 +  \frac{f(k)}{k}\right),$ where $f(2)=f(4)=0$, $f(1)=1$, $f(3)=-1$, and $f(k) = f(k+4)$, i.e., $f$ is the unique non-principal Dirichlet character modulo $4$. This observation is the key feature of the manuscript \cite{Dilcher}, and will be extensively exploited here. 

\section{$q$-analog}\label{section2}
We define the gamma function $\Gamma$ in terms of the improper integral $\Gamma(z)= \int_0^\infty t^{z-1}e^{-t}dt$, which converges for $z\in \mathbb{C}$ with $\mathfrak{R}z > 0$. This is extended to all $z\in \mathbb{C}$ excluding the non-positive integers, by analytic continuation \cite{DLMF}. For $|q|<1, q\in \mathbb{C}$, we can then define the $q$-gamma function $\Gamma_q$ by
 $$ \Gamma_q(x)  : = (1-q)^{1-x} \frac{(q;q)_\infty}{ (q^{x};q)_\infty} = (1-q)^{1-x}\prod_{n\geq 0} \frac{1-q^{n+1}}{1-q^{n+x}},$$
where we have used the standard $q$-Pochhammer notation $(a;q)_0 = 1$ and $(a;q)_n := \prod_{k=0}^{n-1} (1-aq^k)$ for $n\in \mathbb{N}, n\geq1$ \cite{DLMF}. Note that $\Gamma_q(1)=1$, regardless of the value of $q$, and that $\Gamma_q \to \Gamma$ in the $q\to 1$ limit. We then have the following easy result, which appears to be absent from the literature:

\begin{thm}\label{thm1}
Let $\{\alpha_1,\ldots, \alpha_k\}$ and $\{\beta_1,\ldots, \beta_k\}$ be nonzero complex numbers such that $\sum_{i=1}^k \alpha_i = \sum_{i=1}^k \beta_i$. Then
\begin{equation}
\prod_{n\geq 0}  \prod_{j=1}^k  \frac{1-q^{n+\alpha_j}}{1-q^{n+\beta_j}} = \prod_{j=1}^k\frac{\Gamma_q(\beta_j)}{\Gamma_q(\alpha_j)}.
\end{equation}
\end{thm}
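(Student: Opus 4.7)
The plan is to proceed directly from the definition of $\Gamma_q$ and let the hypothesis $\sum \alpha_j = \sum \beta_j$ absorb the only obstruction. First I would form a single ratio
\[
\frac{\Gamma_q(\beta_j)}{\Gamma_q(\alpha_j)} = (1-q)^{\alpha_j-\beta_j}\,\frac{(q^{\alpha_j};q)_\infty}{(q^{\beta_j};q)_\infty} = (1-q)^{\alpha_j-\beta_j}\prod_{n\geq 0}\frac{1-q^{n+\alpha_j}}{1-q^{n+\beta_j}},
\]
noting that the common factor $(q;q)_\infty$ in the numerator and denominator of $\Gamma_q$ cancels immediately, and the single infinite product comes from the definition of the $q$-Pochhammer symbols.

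Second I would take the product over $j=1,\dots,k$ of both sides, which gathers the prefactors into $(1-q)^{\sum_j(\alpha_j-\beta_j)}$. Here is the step that uses the hypothesis: $\sum_j \alpha_j = \sum_j \beta_j$ forces this prefactor to equal $(1-q)^0 = 1$, and so it disappears. What remains is the double product $\prod_{j=1}^k\prod_{n\geq 0}(1-q^{n+\alpha_j})/(1-q^{n+\beta_j})$, which one swaps into the form claimed in the theorem.

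The only genuine technical point is justifying the interchange of $\prod_n$ and $\prod_j$, along with the term-by-term cancellation of the $(q;q)_\infty$ factors across different values of $j$. Since $|q|<1$ and the exponents $n+\alpha_j$ grow linearly in $n$, each of the inner products $\prod_n(1-q^{n+\alpha_j})$ converges absolutely, so all rearrangements are legitimate; this aligns with the author's earlier remark that the manipulations are rigorous on regions of absolute convergence. There is no real obstacle beyond bookkeeping: the identity is essentially the statement that the $(1-q)^{1-x}$ normalization in the definition of $\Gamma_q$ is exactly what is needed to make $\prod_j \Gamma_q(\beta_j)/\Gamma_q(\alpha_j)$ a pure ratio of $q$-Pochhammer products whenever the exponents balance.
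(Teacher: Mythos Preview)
Your proposal is correct and follows essentially the same approach as the paper: expand each ratio $\Gamma_q(\beta_j)/\Gamma_q(\alpha_j)$ via the definition, cancel the $(q;q)_\infty$ factors, and use the balancing condition $\sum_j\alpha_j=\sum_j\beta_j$ to kill the $(1-q)$ prefactor. The paper's proof is slightly terser and does not comment on absolute convergence, but the argument is the same.
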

\begin{proof}
The proof is quite straightforward and depends solely on the definition of $\Gamma_q$. We have the string of equalities
\begin{align*}
\prod_{j=1}^k\frac{\Gamma_q(\beta_j)}{\Gamma_q(\alpha_j)} &= \prod_{j=1}^k \frac{(1-q)^{1-\beta_j}}{(1-q)^{1-\alpha_j}} \prod_{n\geq 0}  \frac{1-q^{n+1}}{1-q^{n+\beta_j}}  \frac{1-q^{n+\alpha_j}}{1-q^{n+1}} \\
& ={(1-q)^{\sum_{j=1}^k\alpha_j -\beta_j}} \prod_{n\geq 0} \prod_{j=1}^k \frac{1-q^{n+\alpha_j}}{1-q^{n+\beta_j}}.
\end{align*}
\end{proof}
In the $q\to1$ limit we recover a known result for infinite products over rational functions of $n$.
\begin{cor}\cite[Thm. 1]{Straub}\label{cor2}
Let $\{\alpha_1,\ldots, \alpha_k\}$ and $\{\beta_1,\ldots, \beta_k\}$ be nonzero complex numbers, non of which are negative integers, such that $\sum_{i=1}^k \alpha_i = \sum_{i=1}^k \beta_i$. Then
\begin{equation}\label{eq_cor2}
\prod_{n\geq 0}  \prod_{j=1}^k  \frac{n+\alpha_j}{n+\beta_j} = \prod_{j=1}^k\frac{\Gamma(\beta_j)}{\Gamma(\alpha_j)}.
\end{equation}
\end{cor}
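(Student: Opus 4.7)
The plan is to derive Corollary \ref{cor2} from Theorem \ref{thm1} by passing to the $q \to 1^{-}$ limit on both sides. On the left, rewrite each ratio $(1-q^{n+\alpha_j})/(1-q^{n+\beta_j})$ as $[n+\alpha_j]_q / [n+\beta_j]_q$ using the $q$-integer $[x]_q := (1-q^x)/(1-q)$; since $[x]_q \to x$ as $q \to 1^{-}$, the factor converges pointwise to $(n+\alpha_j)/(n+\beta_j)$. On the right-hand side, I would invoke the standard identity $\Gamma_q(x) \to \Gamma(x)$ as $q \to 1^{-}$ (valid for all $x$ that are not non-positive integers, as guaranteed by the hypothesis on $\alpha_j,\beta_j$), which immediately gives the claimed right-hand side.

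The one subtlety is interchanging the $q$-limit with the infinite product over $n$, and this is exactly where the balance condition $\sum_j \alpha_j = \sum_j \beta_j$ does its work. Expanding for large $n$ one finds $\prod_j (n+\alpha_j)/(n+\beta_j) = 1 + O(1/n^2)$, because the prospective $O(1/n)$ term cancels by the balance condition; this secures absolute convergence of the product in \eqref{eq_cor2}. The same algebraic cancellation controls the $q$-deformed factors uniformly for $q$ in a one-sided neighborhood of $1$, so a Tannery-type or dominated-convergence argument allows the interchange.

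I do not expect a serious obstacle here: in view of the author's explicit convention of proceeding formally, the proof can in fact be presented in a couple of lines as "let $q \to 1^{-}$ in Theorem \ref{thm1} and apply $\Gamma_q \to \Gamma$." The mathematical content of Corollary \ref{cor2} is entirely subsumed by Theorem \ref{thm1}, with the balance hypothesis playing the same role in both statements (guaranteeing convergence of the associated infinite product), so no genuinely new idea is needed beyond the limit transition.
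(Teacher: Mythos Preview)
Your proposal is correct and matches the paper's approach exactly: the paper offers no separate proof of Corollary~\ref{cor2} beyond the one-line remark that it is recovered from Theorem~\ref{thm1} in the $q\to 1$ limit, which is precisely what you do. Your additional care about justifying the interchange of limit and product goes beyond what the paper provides (the author explicitly works formally), but is consistent with it.
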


The summation condition $\sum_{i=1}^k \alpha_i = \sum_{i=1}^k \beta_i$ in Theorem \ref{thm1} is not essential like in the classical gamma case -- it just ensures that the prefactors cancel correctly. However, the summation condition is necessary in the $q\to1$ limit for the product to converge. This simple theorem then allows us to generalize various products of gamma functions at rational numbers, with an interesting number theoretic twist.

\begin{thm}\label{coprime}
Let $\phi(n)$ denote Euler's totient function and $\Phi_n(x)$ denote the $n$-th cyclotomic polynomial. Then
$$ \prod_{k=1}^n \Gamma_q\left(\frac{k}{n}\right) = (1-q)^{\frac{n-1}{2}} \frac{(q;q)_\infty^n}{(q^{\frac{1}{n}};q^{\frac{1}{n}})_\infty}$$
and
$$\prod_{ \substack{ k=1 \\  (n,k)=1}}^n  \Gamma_q\left(\frac{k}{n}\right)  = (1-q)^{\frac{\phi(n)}{2}} \frac{ (q;q)_\infty^{\phi(n)}}{\prod_{k=1}^\infty \Phi_{\rad n}\left(q^{\frac{k}{n}}\right)^{\mu(\rad n)}}.$$
\end{thm}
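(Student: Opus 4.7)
The plan is to apply the definition
$$\Gamma_q(x)=(1-q)^{1-x}\frac{(q;q)_\infty}{(q^x;q)_\infty}$$
term by term, collect the prefactors, and handle the remaining $q$-Pochhammer products by reindexing exponents. Both identities follow the same template: an elementary bookkeeping step, a summation identity for the exponent of $(1-q)$, and a combinatorial rearrangement of the infinite product over $m\geq 0$.

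For the first identity, taking the product of $\Gamma_q(k/n)$ over $k=1,\dots,n$ and using $\sum_{k=1}^n(1-k/n)=(n-1)/2$ immediately gives the prefactor $(1-q)^{(n-1)/2}(q;q)_\infty^{n}$. The remaining denominator is
$$\prod_{k=1}^n(q^{k/n};q)_\infty=\prod_{k=1}^n\prod_{m\geq 0}\bigl(1-q^{(k+mn)/n}\bigr),$$
and since the pairs $(k,m)$ with $1\leq k\leq n$, $m\geq 0$ enumerate the positive integers $j=k+mn$ bijectively, this collapses to $\prod_{j\geq 1}(1-q^{j/n})=(q^{1/n};q^{1/n})_\infty$.

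For the coprime product the same expansion applies; the exponent of $(1-q)$ is now $\phi(n)-\sum_{(k,n)=1,\,1\leq k\leq n}k/n=\phi(n)/2$ by the classical identity $\sum_{(k,n)=1}k=n\phi(n)/2$ (valid for $n\geq 2$). Because $\gcd(k+mn,n)=\gcd(k,n)$, the same reindexing gives
$$\prod_{\substack{1\leq k\leq n\\(k,n)=1}}(q^{k/n};q)_\infty=\prod_{\substack{j\geq 1\\(j,n)=1}}\bigl(1-q^{j/n}\bigr).$$

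The main obstacle is recasting this last product in the claimed cyclotomic shape. The key step is Möbius inversion applied to the coprimality indicator $[\gcd(j,n)=1]=\sum_{d\mid\gcd(j,n)}\mu(d)$, which yields
$$\prod_{\substack{j\geq 1\\(j,n)=1}}\bigl(1-q^{j/n}\bigr)=\prod_{d\mid n}\prod_{k\geq 1}\bigl(1-q^{dk/n}\bigr)^{\mu(d)}=\prod_{k\geq 1}\Psi_n\bigl(q^{k/n}\bigr),$$
after interchanging the outer products and recognizing $\Psi_n$ from (\ref{psi_def}). The stated form in the theorem then follows from the identity $\Psi_n(x)=\Phi_{\rad n}(x)^{\mu(\rad n)}$ established in Section \ref{section3}. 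The delicate point is justifying the product rearrangements, but since the paper proceeds formally this amounts to a bookkeeping check, with rigor restored by working inside regions of absolute convergence.
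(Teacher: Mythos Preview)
Your proof of the first identity is exactly the paper's: expand the definition, sum the exponents, and dissect $\prod_{k=1}^n(q^{k/n};q)_\infty=(q^{1/n};q^{1/n})_\infty$.

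For the second identity your argument is correct but organized differently from the paper's. You expand the coprime product directly, use the classical $\sum_{(k,n)=1}k=n\phi(n)/2$ for the exponent of $(1-q)$, reindex to obtain $\prod_{(j,n)=1}(1-q^{j/n})$, and then apply M\"obius inversion \emph{inside} the product via the indicator $[\gcd(j,n)=1]=\sum_{d\mid\gcd(j,n)}\mu(d)$ to reach $\prod_{k\geq 1}\Psi_n(q^{k/n})$. The paper instead follows the S\'andor--T\'oth template: it takes logarithms, sets $F(n)=\sum_{k=1}^n\log\Gamma_q(k/n)$ and $G(n)=\sum_{(k,n)=1}\log\Gamma_q(k/n)$, and applies the M\"obius relation $G(n)=\sum_{d\mid n}\mu(d)F(n/d)$ at the level of these aggregate functions, using $\sum_{d\mid n}\mu(d)=0$ and $\sum_{d\mid n}\mu(d)/d=\phi(n)/n$ to read off each piece. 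Both routes land on the same denominator $\prod_{k}\Psi_n(q^{k/n})$ and then invoke Theorem~\ref{psi_reduction}. Your approach is a touch more direct and self-contained; the paper's has the virtue of explicitly deriving the coprime product \emph{from} the full product, making the structural link between the two identities transparent.
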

\begin{proof}
The first equation follows from rewriting the product as
$$ \prod_{k=1}^n \Gamma_q\left(\frac{k}{n}\right) =    \prod_{k=1}^n (1-q)^{1-\frac{k}{n}} \frac{(q;q)_\infty}{ (q^{\frac{k}{n}};q)_\infty}$$
and noting the simple sum $\sum_{k=1}^n \left(1-\frac{k}{n}\right) = \frac{n-1}{2}$ and the dissection $$\prod_{k=1}^n (q^{\frac{k}{n}};q)_\infty = {(q^{\frac{1}{n}};q^{\frac{1}{n}})_\infty}. $$

The second identity follows the idea of \cite{Sandor}, which proved an analogous result for classical gamma functions. We consider the logarithm
$$F(n) := \sum_{k=1}^n \log \Gamma_q\left(\frac{k}{n}\right) = \frac{n-1}{2} \log (1-q) + n\log (q;q)_\infty - \sum_{k=1}^\infty \log (1-q^\frac{k}{n}) $$
and the auxiliary function 
$$G(n)  :=  \sum_{\substack{k=1 \\ (n,k)=1}}^n \log \Gamma_q\left(\frac{k}{n}\right).$$

From the M\"obius inversion formula, we know that any pair of functions $$f(n):=\sum_{k=1}^n \alpha_k, \thinspace\thinspace\thinspace\thinspace g(n):=\sum_{\substack{k=1\\ (n,k)=1}}^n \alpha_k$$ are related as $$g(n) = \sum_{d|n}\mu(d) f\left(\frac{n}{d}\right).$$ We can directly calculate this M\"obius convolution and exponentiate $G(n)$ to obtain our desired product. We use the results \cite{Apostol} (for $n\geq 2$) $$\sum_{d|n} \mu(d)=0, \thinspace\thinspace\thinspace\thinspace\sum_{d|n} \frac{\mu(d)}{d} = \frac{\phi(n)}{n}.$$ Thus
\begin{align*}
G(n) &= \frac{\log (1-q)}{2} \sum_{d|n} \left(\frac{n}{d}-1\right)\mu(d)  + \log (q;q)_\infty \sum_{d|n} \frac{n}{d} \mu(d) -  \sum_{d|n}\sum_{k=1}^\infty \log (1-q^\frac{kd}{n}) \mu(d) \\
&= \frac{\log (1-q)}{2} \phi(n) +  \log (q;q)_\infty \phi(n) - \sum_{k=1}^\infty \log   \prod_{d|n}\left(1-q^\frac{kd}{n}\right) ^{ \mu(d)}.
\end{align*}
Exponentiating $G(n)$ gives the formula $$\prod_{ \substack{ k=1 \\  (n,k)=1}}^n  \Gamma_q\left(\frac{k}{n}\right)  = (1-q)^{\frac{\phi(n)}{2}} \frac{ (q;q)_\infty^{\phi(n)}}{\prod_{k=1}^\infty \Psi_n\left(q^{\frac{k}{n}}\right)},$$
in terms of the modified cyclotomic polynomials $\Psi_n(x)$ defined by Equation \eqref{psi_def}. Applying Theorem \ref{psi_reduction} completes the proof.
\end{proof}

The next identity is an unusual $q$-generalization of the following very recent result, which is based on the limit $\frac{1-q^a}{1-q^b} \to \frac{b}{a} $ as $q\to 1$. For the statement of the next theorem, we use the von Mangoldt function $\Lambda(n)$, which is defined as 
\begin{equation}
\Lambda(n)= \begin{cases} 
\log p &\mbox{if } n=p^a \\ 
0 & \mbox{else} \\
\end{cases}.	 
\end{equation}
\begin{thm}\label{thm4}\cite{Dilcher}
Let $\chi$ denote a primitive non-principal Dirichlet character with conductor $k>1$. Then
$$\prod_{n=2}^\infty \left(1 - \chi(n) \frac{z}{n}\right) = \frac{(2\pi)^{\frac{\phi(k)}{2}}}{(1-z) e^{\frac{\Lambda(k)}{2}}} \prod_{\substack{j=1 \\ (j,k)=1}}^{k-1} \frac{1}{\Gamma\left(\frac{j-\chi(j)z}{k} \right)}.$$
\end{thm}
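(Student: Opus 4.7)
The plan is to recognize the product on the left as a double product indexed by residue classes modulo $k$ and then apply Corollary \ref{cor2}. First I would absorb the missing $n=1$ factor by writing
\[
(1-z)\prod_{n=2}^{\infty}\!\left(1-\frac{\chi(n)z}{n}\right)=\prod_{n=1}^{\infty}\!\left(1-\frac{\chi(n)z}{n}\right),
\]
using $\chi(1)=1$. Since $\chi(n)=0$ whenever $\gcd(n,k)>1$, only integers $n=mk+j$ with $1\le j\le k-1$ and $(j,k)=1$ contribute. Regrouping by residue class gives
\[
\prod_{n=1}^{\infty}\!\left(1-\frac{\chi(n)z}{n}\right)=\prod_{\substack{j=1\\(j,k)=1}}^{k-1}\prod_{m=0}^{\infty}\frac{m+\frac{j-\chi(j)z}{k}}{m+\frac{j}{k}}.
\]

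Next I would apply Corollary \ref{cor2} with $\alpha_j=(j-\chi(j)z)/k$ and $\beta_j=j/k$. The balance condition $\sum_j \alpha_j=\sum_j \beta_j$ reduces to $\sum_{(j,k)=1}\chi(j)=0$, which holds because $\chi$ is non-principal. This produces
\[
\prod_{n=1}^{\infty}\!\left(1-\frac{\chi(n)z}{n}\right)=\prod_{\substack{j=1\\(j,k)=1}}^{k-1}\frac{\Gamma(j/k)}{\Gamma\!\left(\frac{j-\chi(j)z}{k}\right)}.
\]

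The final ingredient, and the only genuinely nontrivial step, is the closed evaluation
\[
\prod_{\substack{j=1\\(j,k)=1}}^{k-1}\Gamma(j/k)=\frac{(2\pi)^{\phi(k)/2}}{e^{\Lambda(k)/2}}.
\]
I would derive this by Möbius inversion from Gauss's multiplication formula $\prod_{j=1}^{k-1}\Gamma(j/k)=(2\pi)^{(k-1)/2}/\sqrt{k}$, in exactly the spirit of the argument used in the proof of Theorem \ref{coprime}. Setting $f(k)=\sum_{j=1}^{k}\log\Gamma(j/k)$ and $g(k)=\sum_{(j,k)=1}\log\Gamma(j/k)$, Möbius inversion gives $g(k)=\sum_{d\mid k}\mu(d)f(k/d)$. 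Using $\sum_{d\mid k}\mu(d)=0$, $\sum_{d\mid k}\mu(d)\,k/d=\phi(k)$ (for $k\ge 2$), and the classical identity $\Lambda(k)=-\sum_{d\mid k}\mu(d)\log d$, one collects the terms to obtain $g(k)=\tfrac{\phi(k)}{2}\log(2\pi)-\tfrac{1}{2}\Lambda(k)$, which upon exponentiation yields the displayed formula.

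Substituting this evaluation back and dividing through by $1-z$ yields the stated identity. The only place care is needed is the arithmetic Möbius step; the rest is mechanical rewriting and one application of Corollary \ref{cor2}.
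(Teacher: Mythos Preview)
Your proof is correct. Note, however, that the paper does not actually supply its own proof of this statement: Theorem~\ref{thm4} is quoted from \cite{Dilcher} as motivation, and the paper instead proves the $q$-analog Theorem~\ref{thm5} (and Corollary~\ref{cor6}). That said, your argument is precisely the $q\to 1$ shadow of what the paper does there: the paper decomposes into residue classes mod $k$, applies Theorem~\ref{thm1} with $\beta_j=j/k$, $\alpha_j=(j-\chi(j)z)/k$ (you apply its limit, Corollary~\ref{cor2}), and then invokes Theorem~\ref{coprime} to evaluate the product of $q$-gammas at $j/k$ over $(j,k)=1$ (you do the classical M\"obius inversion on Gauss's multiplication formula, which is exactly the $q\to 1$ case of that computation and is the result of S\'andor--T\'oth \cite{Sandor} that the paper cites). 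So the route is essentially the same; you have simply carried it out directly in the classical setting rather than as a limit.
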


The advantage of this nonstandard $q$-analog is that we have special values for $\Gamma_q(x)$ when $q=e^{-n\pi}$, which lead to unexpected new product identities. 
\begin{thm}\label{thm5}
Let $\chi$ denote a primitive non-principal Dirichlet character with conductor $k>1$. Then
$$\prod_{n\geq 2}  \left(   \frac{1-q^{n-\chi(n)z}}{1-q^n} \right)  = \left(\frac{1-q}{1-q^{1-z}}\right)\prod_{j=1}^k\frac{\Gamma_{q^k}( \frac{j}{k})}{\Gamma_{q^k}\left( \frac{j-\chi(j) z}{k} \right)}.$$
\end{thm}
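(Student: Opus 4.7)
The plan is to reduce Theorem \ref{thm5} to Theorem \ref{thm1} via a dissection of the outer product by residue classes modulo the conductor $k$, using the periodicity $\chi(n)=\chi(n\bmod k)$.

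First, I would insert the missing $n=1$ term so as to work with a product over all $n\geq 1$. Since $\chi(1)=1$, this inserted factor is exactly $\frac{1-q^{1-z}}{1-q}$, which explains the prefactor in the claimed identity: proving
$$\prod_{n\geq 1}\frac{1-q^{n-\chi(n)z}}{1-q^n}=\prod_{j=1}^k\frac{\Gamma_{q^k}(j/k)}{\Gamma_{q^k}((j-\chi(j)z)/k)}$$
is equivalent to the desired statement after dividing through by the $n=1$ factor.

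Next, I would dissect the index $n\geq 1$ by writing $n=km+j$ with $m\geq 0$ and $1\leq j\leq k$, so that $\chi(n)=\chi(j)$ and
$$1-q^{n-\chi(n)z}=1-(q^k)^{m+(j-\chi(j)z)/k},\qquad 1-q^n=1-(q^k)^{m+j/k}.$$
This turns the product into a double product indexed by $m\geq 0$ and $j\in\{1,\dots,k\}$, which is precisely the form handled by Theorem \ref{thm1} with base $q^k$, parameters $\alpha_j=(j-\chi(j)z)/k$ and $\beta_j=j/k$. The terms with $\gcd(j,k)>1$ (including $j=k$) contribute trivially since $\chi(j)=0$ forces $\alpha_j=\beta_j$, so the effective product ranges over $j$ coprime to $k$, consistent with the statement.

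The one hypothesis that needs checking is the balancing condition $\sum_j\alpha_j=\sum_j\beta_j$, which reduces to $\sum_{j=1}^k\chi(j)=0$; this is exactly the standard orthogonality identity for a non-principal Dirichlet character, so the hypothesis is satisfied. Applying Theorem \ref{thm1} then yields the closed form on the right, and restoring the $n=1$ factor produces the prefactor $\frac{1-q}{1-q^{1-z}}$. The only subtle point, and the place one must be careful, is that the rearrangement of an infinite product into a double product requires absolute convergence of $\sum_n\log(1-q^{n-\chi(n)z})-\log(1-q^n)$ for $|q|<1$, which is clear since each summand is $O(|q|^n)$; this justifies the formal manipulations exactly as the paper's introductory remarks indicate.
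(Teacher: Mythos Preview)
Your proposal is correct and follows essentially the same argument as the paper: dissect the product over $n\geq 1$ into residue classes $n=km+j$ modulo the conductor, invoke Theorem~\ref{thm1} with base $q^k$ and parameters $\alpha_j=(j-\chi(j)z)/k$, $\beta_j=j/k$, verify the balancing condition via $\sum_{j=1}^k\chi(j)=0$, and isolate the $n=1$ factor using $\chi(1)=1$. The only cosmetic difference is that the paper runs the computation in the reverse direction (starting from the $\Gamma_{q^k}$ side), and your observation that the $\gcd(j,k)>1$ terms are trivial is deferred in the paper to the remark preceding Corollary~\ref{cor6}.
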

Though the left-hand side of this theorem obviously reduces to the correct left-hand side as in Theorem \ref{thm4}, the reduction of the right-hand side is not as obvious. In particular, the appearance of the von Mangoldt function necessitates that we consider at least two different cases, depending on whether the conductor is a prime power or not.
\begin{proof}
We make some informed substitutions into Theorem \ref{thm1}, so that we can rewrite the double product on the left hand side as a single product over $n$; this is based on a formal decomposition of the natural numbers into residue classes mod $k$ as $$\prod_{n=0}^\infty \prod_{j=1}^k \alpha_{kn+j} = \prod_{n=1}^\infty \alpha_n.$$ Apply Theorem \ref{thm1} with $\beta_j = \frac{j}{k}$ and $\alpha_j =\beta_j -\frac{\chi(j) z}{k}$. Due to the non-principality of $\chi$ we know that $\sum_{j=1}^k\chi(j) = 0$, so the summation condition is satisfied. We then obtain
\begin{align*}
\prod_{j=1}^k\frac{\Gamma_{q^k}( \frac{j}{k})}{\Gamma_{q^k}\left( \frac{j}{k} - \frac{\chi(j) z}{k} \right)} = \prod_{n\geq 0}  \prod_{j=1}^k  \frac{1-q^{kn+j - \chi(j)z}}{1-q^{kn+j} } = \left(\frac{1-q^{1-z}}{1-q}\right)\prod_{n\geq 2}  \left(   \frac{1-q^{n-\chi(n)z}}{1-q^n} \right).
\end{align*}
In the last equality, we exploited a character's periodicity mod $k$ and the fact $\chi(1)=1$. We also enforce that it is non-principal so that $\sum_{j=1}^k \chi(j) = 0$, and the symmetry condition is met. 
\end{proof}

By appealing to Theorem \ref{coprime} and noting that the term inside the product in Theorem \ref{thm5} is $1$ if $(j,k)>1$ we have the following full $q$-analog:
\begin{cor}\label{cor6}
Let $n\geq 2$ and let $\chi$ denote a primitive non-principal Dirichlet character with conductor $k>1$.  Let $\Phi_n(x):=\prod_{d|n}({1-x^{d}})^{\mu(d)}$. Then
$$\prod_{n\geq 2}  \left(   \frac{1-q^{n-\chi(n)z}}{1-q^n} \right)  = \frac{(1-q)(1-q^k)^{\frac{\phi(k)}{2}}}{1-q^{1-z}}      \frac{ (q^k;q^k)_\infty^{\phi(k)}}{\prod_{j=1}^\infty \Phi_{\rad k}\left(q^{j}\right)^{\mu\left(\rad k\right)}}      \prod_{\substack{j=1\\(j,k)=1}}^k\frac{1}{\Gamma_{q^k}\left( \frac{j-\chi(j) z}{k} \right)}.$$
\end{cor}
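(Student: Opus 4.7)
The plan is to splice Theorem \ref{thm5} together with the second identity of Theorem \ref{coprime}, using the vanishing of $\chi$ on non-units as a bridge. The entire argument is bookkeeping, so I would set it out in three short moves.

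First, I would start from the conclusion of Theorem \ref{thm5}, namely
\begin{equation*}
\prod_{n\geq 2}\left(\frac{1-q^{n-\chi(n)z}}{1-q^n}\right) = \left(\frac{1-q}{1-q^{1-z}}\right)\prod_{j=1}^{k}\frac{\Gamma_{q^k}(\tfrac{j}{k})}{\Gamma_{q^k}(\tfrac{j-\chi(j)z}{k})},
\end{equation*}
and simplify the product over $j$ by splitting on whether $(j,k)=1$. For $(j,k)>1$ one has $\chi(j)=0$, so the corresponding factor in the denominator equals $\Gamma_{q^k}(j/k)$ and cancels against the numerator. What remains is
\begin{equation*}
\prod_{j=1}^{k}\frac{\Gamma_{q^k}(\tfrac{j}{k})}{\Gamma_{q^k}(\tfrac{j-\chi(j)z}{k})} \;=\; \Bigl(\prod_{\substack{j=1\\(j,k)=1}}^{k}\Gamma_{q^k}(\tfrac{j}{k})\Bigr)\prod_{\substack{j=1\\(j,k)=1}}^{k}\frac{1}{\Gamma_{q^k}(\tfrac{j-\chi(j)z}{k})}.
\end{equation*}

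Second, I would evaluate the leftover numerator $\prod_{(j,k)=1}\Gamma_{q^k}(j/k)$ by invoking the second formula of Theorem \ref{coprime} with base $q$ replaced by $q^k$ and modulus $n$ replaced by $k$. The only subtlety is the substitution inside the cyclotomic factor: under $q\mapsto q^{k}$, one has $(q^{k})^{j/k}=q^{j}$, so $\Phi_{\rad k}(q^{k\cdot j/k})^{\mu(\rad k)}=\Phi_{\rad k}(q^{j})^{\mu(\rad k)}$. This gives
\begin{equation*}
\prod_{\substack{j=1\\(j,k)=1}}^{k}\Gamma_{q^{k}}\!\bigl(\tfrac{j}{k}\bigr) \;=\; (1-q^{k})^{\phi(k)/2}\;\frac{(q^{k};q^{k})_\infty^{\phi(k)}}{\prod_{j=1}^{\infty}\Phi_{\rad k}(q^{j})^{\mu(\rad k)}}.
\end{equation*}

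Third, I would substitute this expression back into the identity from Theorem \ref{thm5} and collect all prefactors. The $(1-q)/(1-q^{1-z})$ survives unchanged, the factor $(1-q^{k})^{\phi(k)/2}(q^{k};q^{k})_\infty^{\phi(k)}$ appears on top, the cyclotomic product lands in the denominator, and the remaining reciprocal $\Gamma_{q^{k}}$-product over the units mod $k$ attaches on the right. Matching these factors term by term reproduces the claimed identity. There is no real obstacle; the only point worth stating carefully is the exponent shuffle $q^{k(j/k)}=q^{j}$ that rewrites the cyclotomic denominator as a product over $q^{j}$ rather than over $q^{k\cdot j/k}$.
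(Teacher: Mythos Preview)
Your proposal is correct and follows exactly the route the paper itself indicates: start from Theorem \ref{thm5}, drop the factors with $(j,k)>1$ since $\chi(j)=0$ makes them trivial, and then evaluate the surviving numerator $\prod_{(j,k)=1}\Gamma_{q^k}(j/k)$ via the second identity of Theorem \ref{coprime} with $q\mapsto q^k$, $n\mapsto k$. The only detail you spell out that the paper leaves implicit is the substitution $(q^k)^{j/k}=q^j$ inside the cyclotomic denominator, which is indeed the one thing worth noting.
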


\begin{example}
Throughout, our special values for $\Gamma_q$ are pulled from \cite{Jackson}. Let $\chi(1)=1, \chi(3)=-1, \chi(2)=\chi(4)=0$ be the unique non-principal character mod 4. Also note that
$$ \Gamma_{q}\left(x+1\right) = \frac{1-q^{x}}{1-q}  \Gamma_{q}\left(x\right),$$
the functional relation for the $q$ gamma function. Apply this with $q\to q^4$ and $x = \frac{1-z}{4}$ so 
$$ \Gamma_{q^4}\left(\frac{5-z}{4}\right)= \frac{1-q^{1-z}}{1-q^4} \Gamma_{q^4} \left(\frac{1-z}{4}\right).$$ Applying Theorem \ref{thm5} with this particular character gives 
$$\prod_{n\geq 2}  \left(   \frac{1-q^{n-\chi(n)z}}{1-q^n} \right)  = \left(\frac{1-q}{1-q^4}\right)\frac{\Gamma_{q^4}( \frac14)\Gamma_{q^4}( \frac34)}{\Gamma_{q^4}\left( \frac{5- z}{4} \right)\Gamma_{q^4}\left( \frac{3+ z}{4} \right)}.$$
Now let $z \to 1$ and $q \to e^{-\pi}$ while noting $\Gamma_q(1)=1$. Therefore the denominator vanishes and 
\begin{equation}
\prod_{n\geq 2}  \left(   \frac{1-e^{ - n\pi+\chi(n) \pi }}{1-e^{-n\pi}} \right)  = \frac{e^{\frac{3\pi}{8}} (1-e^{-\pi})} {2^{\frac{23}{8}} \pi^{\frac32}} \Gamma^2\left(\frac14\right),
\end{equation}
where we've used the identity \cite{Jackson}
$$\Gamma_{e^{-4\pi}}\left(\frac14\right) \Gamma_{e^{-4\pi}}\left(\frac34\right) = \frac{e^{-\frac{29\pi}{8}} (e^{4\pi}-1)} {2^{\frac{23}{8}} \pi^{\frac32} } \Gamma^2\left(\frac14\right). $$

If instead we let $z\to -1$ and use the functional equation for the $q$-gamma function then we can write
$$\prod_{n\geq 2}  \left(   \frac{1-q^{n+\chi(n)}}{1-q^n} \right)  = \left(\frac{1-q}{1-q^2}\right)\frac{\Gamma_{q^4}( \frac14)\Gamma_{q^4}( \frac34)}{\Gamma_{q^4}^2\left( \frac{1}{2} \right)}.$$
Letting $q\to e^{-\pi}$ and using the special value
$$\Gamma_{e^{-4\pi}} \left(\frac12\right) = \frac{e^{-\frac74\pi} \sqrt{e^{4\pi}-1  } }{2^{\frac74}\pi^{\frac34}} \Gamma\left(\frac14\right)$$
lets us finally state the unexpected
\begin{equation}
\prod_{n\geq 2}  \left(   \frac{1-e^{ - n\pi-\chi(n) \pi }}{1-e^{-n\pi}} \right)  = \frac{2^{\frac58} e^{-\frac{\pi}{8}}}{ 1+e^{-\pi}}.
\end{equation}
\end{example}

\begin{example}
In the previous example, if we instead let $q\to e^{-2\pi}$ and exploit the reduction formulae \cite{Jackson}
\begin{align*}
\Gamma_{e^{-8\pi}} \left(\frac12\right) &= \frac{e^{-\frac72\pi} \sqrt{e^{8\pi}-1  } }{2^{\frac94}\pi^{\frac34} \sqrt{1+\sqrt{2}}} \Gamma\left(\frac14\right), \\
\Gamma_{e^{-8\pi}}\left(\frac14\right) \Gamma_{e^{-8\pi}}\left(\frac34\right) &= \frac{e^{-\frac{29\pi}{4}} (e^{8\pi}-1)} {16 \pi^{\frac32}\sqrt{1+\sqrt{2}} } \Gamma^2\left(\frac14\right),
\end{align*}
we obtain the higher order identities
\begin{align}
\prod_{n\geq 2}  \left(   \frac{1-e^{ - 2n\pi+2\chi(n) \pi }}{1-e^{-2n\pi}} \right)  &= \frac{e^{\frac{3\pi}{4}}(1-e^{-2\pi})} {16 \pi^{\frac32} } \Gamma^2\left(\frac14\right), \\
\prod_{n\geq 2}  \left(   \frac{1-e^{ - 2n\pi-2\chi(n) \pi }}{1-e^{-2n\pi}} \right)  &= \frac{\sqrt{2+2\sqrt{2}} e^{-\frac{\pi}{4}}}{ 1+e^{-2\pi}}.
\end{align}
\end{example}

\begin{example}
Jacobi and Legendre symbols are in fact special cases of non-principal characters, leading to simplified versions of the previous theorems. Given an odd prime $p$, the Legendre symbol is defined as 
\begin{equation}
(n|p) = \begin{cases} 
0 &\mbox{if } (n,p)>1 \\ 
1 & \mbox{if $n$ is a quadratic residue} \\
-1 & \mbox{if $n$ is not a quadratic residue}  
\end{cases}.	
\end{equation}
This is a non-principal Dirichlet character with period $p$, so in particular we can apply Theorem \ref{thm5} and Corollary \ref{cor6} to it.
\end{example}

\section{Modified Cyclotomic Polynomials}\label{section3} 
In this section we consider the modified cyclotomic polynomial 
\begin{equation*}
\Psi_n(x):=\prod_{d|n}\left({1-x^{d}}\right)^{\mu(d)}.
\end{equation*}
Because of their close connection to the ubiquitous cyclotomic polynomials, they are deserving of independent consideration. In fact, we present a complete characterization in terms of cyclotomic polynomials, which allows us to simplify the statement of Theorem \ref{coprime}. First, by applying M\"obius inversion to their definition, we note $$(1-x^n)^{\mu(n)} = \prod_{d|n}\Psi_d(x)^{\mu(d)}. $$ By considering simple cases such as $\Psi_2(x) = \frac{1-x}{1-x^2}$, we see that the $\Phi_n$ are not even polynomials, but are instead rational functions. 
\begin{lem}
We have the following reduction formulae:
\begin{equation}
\Psi_{p^k n}(x) = \begin{cases} 
\Psi_n(x) & \mbox{if } p|n \\ 
\frac{\Psi_n(x)}{\Psi_n(x^p)} & \mbox{if }p \nmid n  
\end{cases}.	
\end{equation}
\end{lem}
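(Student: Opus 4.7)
The plan is to reduce everything to squarefree divisors and then split according to whether $p$ is already a prime factor of $n$. The key observation is that $\mu(d)=0$ unless $d$ is squarefree, so the defining product can be taken over just the squarefree divisors of $n$, and these are exactly the divisors of $\operatorname{rad}(n)$.

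First I would write
\begin{equation*}
\Psi_m(x)=\prod_{\substack{d\mid m\\ d \text{ squarefree}}}(1-x^d)^{\mu(d)}.
\end{equation*}
For Case 1 ($p\mid n$), the radical satisfies $\operatorname{rad}(p^k n)=\operatorname{rad}(n)$, hence the squarefree divisors of $p^k n$ and of $n$ coincide, giving $\Psi_{p^k n}(x)=\Psi_n(x)$ immediately.

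For Case 2 ($p\nmid n$), the radical is $\operatorname{rad}(p^k n)=p\cdot\operatorname{rad}(n)$, and since $\gcd(p,n)=1$ the squarefree divisors of $p^k n$ are precisely the elements $d$ or $pd$ where $d$ ranges over squarefree divisors of $n$. Using the multiplicativity property $\mu(pd)=\mu(p)\mu(d)=-\mu(d)$ (valid because $p\nmid d$), I would split the product as
\begin{equation*}
\Psi_{p^k n}(x)=\prod_{\substack{d\mid n\\ d \text{ sqf}}}(1-x^d)^{\mu(d)}\cdot\prod_{\substack{d\mid n\\ d \text{ sqf}}}(1-x^{pd})^{-\mu(d)}=\frac{\Psi_n(x)}{\Psi_n(x^p)},
\end{equation*}
where the second equality uses $(1-x^{pd})=(1-(x^p)^d)$ and recognizes the second factor as $\Psi_n(x^p)$.

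The argument is essentially bookkeeping, so there is no real obstacle; the only thing one has to be careful about is the combinatorial parametrization of squarefree divisors in Case 2 and the sign coming from $\mu(p)=-1$. Everything else follows from the definition.
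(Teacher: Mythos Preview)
Your proof is correct and follows essentially the same approach as the paper: split the divisors of $p^k n$ according to whether $p$ divides them, use $\mu(pd)=-\mu(d)$ when $p\nmid d$, and recognize the resulting factors as $\Psi_n(x)$ and $\Psi_n(x^p)$. The only organizational difference is that you invoke the squarefree/radical observation up front to treat all powers $p^k$ simultaneously, whereas the paper first proves the statement for $pn$ and then iterates to reach $p^k n$; both routes are equally valid.
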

\begin{proof}
First, assume $p|n$. Then $$\Psi_{pn}(x) = \prod_{d|pn}(1-x^d)^{\mu(d)} =\prod_{d|n}(1-x^d)^{\mu(d)} \prod_{\substack{d|pn \\ d\nmid n}}(1-x^d)^{\mu(d)} = \Psi_n(x),  $$
since every term in the second product has $p^2|d$ so $\mu(d)=0$. Iterating this $k$ times proves the first case. Now assume $p\nmid n$. Then $$\Psi_{pn}(x) = \prod_{d|pn}(1-x^d)^{\mu(d)} =\prod_{d|n}(1-x^d)^{\mu(d)} \prod_{d|n}(1-x^{pd})^{\mu(pd)} = \Psi_n(x)  \prod_{d|n}(1-x^{pd})^{-\mu(d)}. $$ To address the full case of $p^kn$ where $(n, p) = 1$, we iterate our first result $k-1$ times and then apply our reduction: 
$$\Psi_{p^k n}(x)  = \Psi_{p^{k-1} n}(x) = \cdots = \Psi_{p n}(x)= \frac{\Psi_n(x)}{\Psi_n(x^p)}.$$
\end{proof}
From this lemma, we can deduce a complete characterization of $\Psi_n(x)$.
\begin{thm}\label{psi_reduction}
Let $\Phi_n(x)$ denote the $n$-th cyclotomic polynomial. Then  $$\Psi_{ n}(x) = \Phi_{\rad n}(x)^{\mu(\rad n)}.$$
\end{thm}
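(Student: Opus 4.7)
The plan is to reduce the general case to the squarefree case using the lemma just proved, and then to recognize $\Psi_m(x)$ for squarefree $m$ as essentially the Möbius-inverted form of the standard factorization $1 - x^m = \prod_{d\mid m}\Phi_d(x)$.

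First I would iterate the first case of the preceding lemma: write $n = p_1^{a_1}\cdots p_r^{a_r}$ and repeatedly strip one prime factor at a time. Each application of $\Psi_{p^k m}(x) = \Psi_m(x)$ (when $p\mid m$) reduces the exponent of a prime in the index without changing the value, so after finitely many steps we obtain $\Psi_n(x) = \Psi_{\rad n}(x)$. This reduces the theorem to proving the squarefree case: for squarefree $m$, $\Psi_m(x) = \Phi_m(x)^{\mu(m)}$.

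For the squarefree case, I would start from the well-known cyclotomic factorization $1 - x^m = \prod_{d\mid m}\Phi_d(x)$. Applying Möbius inversion (in the multiplicative form) gives
\begin{equation*}
\Phi_m(x) = \prod_{d\mid m}(1 - x^d)^{\mu(m/d)}.
\end{equation*}
Now the key observation: since $m$ is squarefree, every divisor $d\mid m$ is itself squarefree, and $d$ and $m/d$ are coprime. Hence $\mu$ is multiplicative on this pair, so $\mu(d)\mu(m/d) = \mu(m)$, which rearranges to $\mu(d) = \mu(m)\mu(m/d)$ for every $d\mid m$. Substituting into the definition of $\Psi_m$,
\begin{equation*}
\Psi_m(x) = \prod_{d\mid m}(1-x^d)^{\mu(d)} = \prod_{d\mid m}(1-x^d)^{\mu(m)\mu(m/d)} = \left(\prod_{d\mid m}(1-x^d)^{\mu(m/d)}\right)^{\mu(m)} = \Phi_m(x)^{\mu(m)}.
\end{equation*}

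Combining the two reductions yields $\Psi_n(x) = \Psi_{\rad n}(x) = \Phi_{\rad n}(x)^{\mu(\rad n)}$, as claimed. There is no real obstacle here once the lemma is in hand; the one step that requires a moment's care is the identity $\mu(d) = \mu(m)\mu(m/d)$, which is valid precisely because we have already passed to the squarefree radical and can invoke coprimality of $d$ and $m/d$.
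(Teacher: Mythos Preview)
Your proposal is correct and follows essentially the same route as the paper: reduce to the squarefree radical via the lemma, then use the identity $\mu(d)\mu(m/d)=\mu(m)$ for squarefree $m$ to identify $\Psi_m$ with $\Phi_m^{\mu(m)}$. The only cosmetic difference is that the paper reindexes $d\mapsto m/d$ before applying the M\"obius identity, whereas you apply it directly and invoke the standard formula $\Phi_m(x)=\prod_{d\mid m}(1-x^d)^{\mu(m/d)}$; these are the same computation.
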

\begin{proof}
From the previous Lemma, we know $\Psi_n(x) = \Psi_{\rad n}(x)$, where $\rad n$ is squareefree. For squarefree $n$, we have the reduction $$\mu\left(\frac{n}{d}\right) = \mu\left(n\right)\mu\left({d}\right).$$ This is the key aspect of the proof -- since $n$ is squarefree, we only care about the parity of its number of prime factors; we can then consider four cases, based on the parity of the number of prime factors of $n$ and $d$. Manually checking all four cases shows that the equality holds.

Now we directly examine the definition of $\Psi_n(x)$ for squarefree $n:$
$$\Psi_n(x)=\prod_{d|n}\left({1-x^{d}}\right)^{\mu(d)}=\prod_{d|n}\left({1-x^{\frac{n}{d}}}\right)^{\mu\left(\frac{n}{d}\right)}=\prod_{d|n}\left({1-x^{\frac{n}{d}}}\right)^{\mu\left({d}\right)\mu\left({n}\right)} = \Phi_n(x)^{\mu(n)}.$$
Reducing $\Psi_n$ to $\Psi_{\rad n}$ and then rewriting in terms of cyclotomic polynomials completes the proof.
\end{proof}

\section{Acknowledgements}
I can never say this enough -- to Christophe Vignat, who's mentored me from an ignorant high schooler to a slightly less ignorant college student -- thank you, thank you, thank you! Additionally, I'd like to thank Karl Dilcher for first introducing me to this topic and very carefully reading the final draft of this manuscript.


\begin{thebibliography}{1}
\bibitem{Apostol}T. M. Apostol, Intoduction to analytic number theory, Springer-Verlag, New York-Heidelberg, 1976.

\bibitem{Straub}M. Chamberland and A. Straub, On gamma quotients and infinite products, Adv. in Appl. Math., Number 5 (51), 2013, 546--562.

\bibitem{Dilcher}K. Dilcher and C. Vignat, Infinite products involving Dirichlet characters and cyclotomic polynomials. Arxiv {https://arxiv.org/abs/1801.09160}, 2018.

\bibitem{Jackson}Rev. F. H. Jackson, The basic gamma-function and the elliptic functions, Proc. Royal Soc. London, Number 508 (76), 1905, 127-144.

\bibitem{ShortGamma}A. Nijenhuis, Short gamma products with simple values, Amer. Math. Monthly, Number 8 (117), 2010, 733--737.

\bibitem{Sandor}J. S\'andor and L. T\'oth, A remark on the gamma function, Elem. Math., Number 3 (44), 1989, 73--76.

\bibitem{NIST}
F.~W.~J. Olver, D.~W. Lozier, R.~F. Boisvert, and C.~W. Clark, editors.
\newblock {\em {{NIST} {H}andbook of {M}athematical {F}unctions}}.
\newblock {Cambridge University Press}, {New York, NY}, {2010}.
\newblock {Print companion to \cite{DLMF}}.

\bibitem{DLMF}
{\it NIST Digital Library of Mathematical Functions}.
\newblock http://dlmf.nist.gov/, Release 1.0.13 of 2016-09-16.
\newblock F.~W.~J. Olver, A.~B. {Olde Daalhuis}, D.~W. Lozier, B.~I. Schneider,
  R.~F. Boisvert, C.~W. Clark, B.~R. Miller and B.~V. Saunders, eds.
\end{thebibliography}
\end{document}